\documentclass[11pt]{article}
\usepackage[utf8]{inputenc}
\bibliographystyle{unsrt}
\usepackage{amsmath}
\usepackage{amssymb}
\usepackage{enumerate} 
\usepackage{amsthm}
\usepackage{hyperref}

\usepackage{todonotes}

\newtheorem{theorem}{\hspace*{\parindent}Theorem}
\newtheorem{lemma}{\hspace*{\parindent}Lemma}

\newtheorem{definition}{\hspace*{\parindent}Definition}

\title{Estimates of  Lebesgue  Constants for Lagrange Interpolation Processes by  Rational Functions under Mild Restrictions to their Fixed Poles} 
\author{S. Kalmykov, A. Lukashov}
\date{\today}

\usepackage{graphicx}

\begin{document}
\maketitle
\begin{abstract}
We estimate the  Lebesgue constants for Lagrange interpolation processes  on one or several intervals by rational functions with fixed poles. We admit that the poles have  accumulation points on the intervals.

To prove it we use an analog of the inverse polynomial image method for rational functions with fixed  poles.
\medskip

Keywords:  Interpolation, Lebesgue constant, Inverse polynomial images, Chebyshev-Markov fractions.
\end{abstract}

\section{Introduction}
Interpolation by polynomials on real sets has a long history. Many results in this area can be found in classical books by A.A. Privalov \cite{Privalov}, and  J.~Szabados, P.~Vertesi. \cite{SzabadosVertesi} For basic theory and applications of interpolation processes see also \cite{Mastroianni}. 

Recently several results about the Lebesgue constants of interpolation processes on the finite union of intervals were obtained (see \cite{LukashovSzabados,KrooSzabados}).

In \cite{LukashovSzabados} the authors used an estimate for Lebesgue constants of interpolation processes by rational functions with fixed poles from \cite{Lukashov2005}.

One of the best choices for interpolation by polynomials on an interval is to use zeros of Chebyshev polynomials as nodes of interpolation. Their analogs for rational functions with fixed poles are Chebyshev-Markov rational functions deviated least from zero on an interval. Corresponding interpolation processes were introduced by V. N. Rusak \cite{Rusak1962}. He proved also the estimate $O(\log{n})$ of the related Lebesgue constants under rather general conditions on the fixed poles. Here and below $n$ is the degree of a rational function. Rather general estimate for the related Lebesgue constants is due to A.P.~Starovoitov (\cite{Starovoitov1983} and \cite{Starovoitov}). The last estimate was generalized for the case of two intervals by the second author in \cite{Lukashov1998}. It includes a possible accumulation of fixed poles to the ends of the convex hull of the intervals. The estimate for the case of three or more intervals from \cite{Lukashov2005} did not allow fixed poles to accumulate on the convex hull of the intervals.
An algorithm to compute the interpolation nodes for the case of one interval can be found in \cite{vandeunetal}.

Our first result gives a class of interpolation matrices that allow an estimate of Lebesgue constants of interpolation by rational functions on several intervals with fixed poles having accumulation points at the ends of the convex hull of the intervals.

The first estimate of the Lebesgue constants of interpolation by rational functions on one interval with fixed poles having accumulation points at the ends of the interval was obtained in \cite{Rovba1976}. Later in \cite{Rovba_Dirvuk} an estimate for those Lebesgue constants on one interval was given without any suppositions on the fixed \textit{real} poles. In  \cite{Rovba} the authors proved the estimate $C\log n $ with a constant $C $ which does not depend on the fixed complex poles at $ i a$, $-ia$, where $a\in\mathbb{R}.$

In \cite{Min1998} G.~Min  obtained the sharp estimate $O(\log(n))$ under the same conditions as in \cite{Rusak1962}. 
The following question is a part of an open problem formulated in \cite[Remark~2, p.~129]{Min1998}. 
Whether $\log(n)$ growth of the Lebesgue constant implies that the poles must stay outside an interval which contains $[-1,1]$ in its interior? 
It is clear that the negative answer to this question follows from \cite{Rovba1976,Starovoitov,Rovba}.

In this paper, we improve this proposition. To be more precise, we show that any finite subset of one or even several intervals can be considered as a part of the set of accumulating points of the poles preserving $C\log(n)$ estimate for the Lebesgue constants. 

To prove it we develop an analog of the inverse polynomial image method (see e.g. \cite{Totik} and \cite{KrooSzabados}) for rational functions with fixed poles. 

The main feature of this variant of the approach is the following. We do not need to approximate intervals by inverse polynomial images. Instead of it we may fix a partition of the intervals and move appropriately poles of the rational that provides us the partition. 

The problem concerning infinitely many accumulation points on intervals is still open. 

The structure of the paper is the following. 
First,   notations are introduced, 
and some known, basic auxiliary results are recalled. Then new results are formulated  (Theorems 4-6). Proofs are provided in a separate section. 

\section{Main results}

For any $s\geq 1$ let \begin{equation}
    \label{partition} -1=c_1<c_2<\cdots<c_{2s-1}<c_{2s}=1
\end{equation}  be a finite partition of the interval $[-1,1],$ and let \begin{equation}
\label{intervals}
 J_s=\bigcup_{i=1}^s[c_{2i-1},c_{2i}]   
\end{equation}  be the corresponding set of intervals.

The Lebesgue function of interpolation on $J_s$ by rational functions with fixed poles from the matrix of their reciprocal values \begin{equation}
\label{poles}
\mathfrak{A}=\{a_{k,n}\}\subset \mathbb{D}=\{z\in\mathbb{C}:|z|<1\}    
\end{equation}  for the matrix of interpolation nodes \begin{equation}
\label{nodes}
 \mathfrak{M}=\{x_{k,n},x_{n,n}<x_{n-1,n}<\cdots x_{1,n} \}  
\end{equation} 
is defined as
\begin{equation}
\label{Lebegfunct}
 \lambda_n(\mathfrak{M},\mathfrak{A},x)=\sum_{k=1}^n|\ell_{k,n}(\mathfrak{M},\mathfrak{A},x)|,   
\end{equation} 
where
$$ \ell_{k,n}(\mathfrak{M},\mathfrak{A},x)=\frac{\varpi_n(x)}{\varpi'_n(x_{k,n})(x-x_{k,n})},\quad \varpi_n(x)=\prod_{k=1}^n\frac{x-x_{k,n}}{1-a_{k,n}x}.$$
Furthermore, the Lebesgue constant is given by 
\begin{equation}
    \label{Lebegconst}\lambda_n(\mathfrak{M},\mathfrak{A})=\Vert \lambda_n(\mathfrak{M},\mathfrak{A},x)\Vert_{J_s},
\end{equation}
where $\Vert\cdot\Vert_K $ is the supremum norm on the compact set $K.$

To introduce a regular matrix and describe rational functions such that  their zeros are  appropriate nodes of interpolation we need the following potential theoretic notion. 

\begin{definition} {\rm (see e.g. \cite{Goluzin} or \cite{Ransford})} 
Let $B$ be a domain in $\overline{\mathbb{C}}$ such that its boundary consists of finitely many disjoint piecewise smooth curves or arcs and let $\alpha$ be a closed subset of $\partial B$ also consisting of arcs or curves. We denote by $\omega(z,\alpha,B)$ a bounded harmonic function in $B$ such that $\omega(z,\alpha,B)\rightarrow 1$ as $z$ approaches interior points of $\alpha$, and $\omega(z,\alpha,B)\rightarrow 0$ as $z$ approaches interior points of $\partial B\setminus \alpha$. The function $\omega(z,\alpha, B)$ is called the harmonic measure of $\alpha$ with respect to $B$ at a point $z$.  
\end{definition}

In what follows we use the notation $\omega_j(\cdot)=\omega(1/a_{k,n},[c_{2j-1},c_{2j}],\overline{\mathbb{C}}\setminus J_s)$.

\begin{definition}
Matrix $\mathfrak{A}$ from \eqref{poles} is called regular with respect to $J_s,$ if for every $j=1,\ldots,s$ and $n\geq s$ the sum of the harmonic measures of the interval $[c_{2j-1},c_{2j}]$ with respect to $1/a_{k,n}$ is a positive integer, i.e.
\begin{equation}
    \label{regular}
    \sum_{k=1}^n\omega_j(1/a_{k,n})=q_j, \quad q_j\in\mathbb{N}, \quad j=1,\ldots,s.
\end{equation}
\end{definition}

Regularity of matrices of (reciprocal to) fixed poles is a natural notion, since it guarantees that for every $n\geq s$ $J_s$ can be considered as an inverse image of an interval  under suitable rational function of degree $n$ with (reciprocal to) fixed poles given by $n$-th row of $\mathfrak{A}$. This rational function is a direct generalization to the case of several intervals of the Chebyshev-Markov rational functions. It can be written (up to a constant multiplier) in the form 
\begin{equation}
    \label{ChebMark}\varpi_n(x)=\cos\left(\pi\sum_{k=1}^n\omega(1/a_{k,n},J_s\cap[c_1,x],\overline{\mathbb{C}}\setminus J_s)\right).
\end{equation}
Denote
\begin{equation}
    \label{H}H(x)=\prod_{j=1}^s(x-c_{2j-1})(x-c_{2j}),
\end{equation}
\begin{equation}
    \label{gamma}
    \gamma_n(J_s,\mathfrak{A},x)=\sqrt{-H(x)}\pi\sum_{k=1}^n\omega'(1/a_{k,n},J_s\cap[c_1,x],\overline{\mathbb{C}}\setminus J_s),\quad x\in\,\mathrm{Int}\,J_s.
\end{equation}

Now we can formulate A.P. Starovoitov's theorem and two theorems from earlier papers of  second author in these notations. We enumerate the reciprocal of poles in a row as follows: $a_{1,n}=\ldots=a_{\kappa_n,n}=0$ (here $1/0=\infty),$ $a_{k,n}\neq 0,$ $k=\kappa_n+1,\ldots,n,$ Re $a_{k,n}>0, k=\kappa_n+1,\ldots,n_1$, Re $a_{k,n}<0,k=n_1+1,\ldots,n,$ and for Im $a_{k,n}>0$ Im $a_{k+1,n}=-$ Im
$a_{k,n}.$ Here we do not indicate possible dependence of $n_1$ of $n.$
\begin{theorem}[\cite{Starovoitov}] Let $s=1,\kappa_n\geq 1,n\in\mathbb{N},$ and the matrix of reciprocal values of poles $\mathfrak{A}$ has no accumulation points on $\{|z|=1\}$ with possible exception for points $\{\pm 1\},$ which can be attained by nontangent paths, and  satisfies the conditions
\begin{equation}
    \label{starov1}
    \sum_{k=1}^{n_1}\frac{\sqrt{1-|a_{k,n}|}t}{1-|a_{k,n}|+t^2}\geq C_1 \quad\mathrm{for}\quad\sqrt{1-\max_{1\leq k\leq n_1}|a_{k,n}|}\leq t\leq 1;
\end{equation}
\begin{equation}
    \label{starov2}
     \sum_{k=n_1+1}^{n}\frac{\sqrt{1-|a_{k,n}|}t}{1-|a_{k,n}|+t^2}\geq C_2 \quad\mathrm{for}\quad\sqrt{1-\max_{n_1+1\leq k\leq n}|a_{k,n}|}\leq t\leq 1.
\end{equation}
Then for the matrix $\mathfrak{M}$ such that its $n$-th row consists of zeroes of $\varpi_n$ from~\eqref{ChebMark} the estimate
\begin{equation}
    \label{starovestim} \lambda_n(\mathfrak{M},\mathfrak{A})\leq C_3\log\Vert\gamma_n(J_1,\mathfrak{A},\cdot)\Vert_{J_1}
\end{equation}
holds.

\end{theorem}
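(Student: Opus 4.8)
The plan is to reduce everything to a generalized trigonometric substitution. For $s=1$ put
\[
\theta_n(x)=\pi\sum_{k=1}^{n}\omega\bigl(1/a_{k,n},[-1,x],\overline{\mathbb{C}}\setminus[-1,1]\bigr).
\]
By \eqref{ChebMark} we have $\varpi_n(x)=\cos\theta_n(x)$, and since the harmonic measure of the whole interval equals $1$, the function $\theta_n$ increases monotonically from $\theta_n(-1)=0$ to $\theta_n(1)=\pi n$. Consequently the nodes $x_{k,n}$ are precisely the points at which $\theta_n$ takes the values $(2k-1)\pi/2$, and differentiating $\varpi_n=\cos\theta_n$ gives $\varpi_n'(x_{k,n})=\mp\theta_n'(x_{k,n})$. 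Using $\sqrt{-H(x)}=\sqrt{1-x^2}$ and $\theta_n'(x)=\gamma_n(J_1,\mathfrak{A},x)/\sqrt{1-x^2}$ from \eqref{gamma}, the Lebesgue function \eqref{Lebegfunct} takes the form
\[
\lambda_n(\mathfrak{M},\mathfrak{A},x)=\bigl|\cos\theta_n(x)\bigr|\sum_{k=1}^{n}\frac{1}{\theta_n'(x_{k,n})\,|x-x_{k,n}|}.
\]

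Next I would carry out the substitution $x=\cos\phi$, $\phi\in[0,\pi]$, and set $\Theta(\phi)=\theta_n(\cos\phi)$. Then $\Theta$ is monotone and $|\Theta'(\phi)|=\sqrt{1-x^2}\,\theta_n'(x)=\gamma_n(J_1,\mathfrak{A},\cos\phi)$, so that $\max_{\phi}|\Theta'(\phi)|=\|\gamma_n(J_1,\mathfrak{A},\cdot)\|_{J_1}$. Thus $\|\gamma_n\|$ is exactly the Bernstein-type quantity controlling the density of the angular map in the $\phi$-coordinate; for the classical Chebyshev choice $a_{k,n}\equiv0$ one has $\Theta(\phi)=n\phi$ and $\|\gamma_n\|=n$, recovering the familiar $\log n$. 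In these coordinates the images $\phi_k$ of the nodes satisfy $\Theta(\phi_k)=(2k-1)\pi/2$, consecutive nodes are separated by a fixed increment $\pi$ of $\Theta$, and the finest possible $\phi$-spacing between neighbouring nodes is of order $1/\|\gamma_n\|$, attained where the density is largest.

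The core estimate is then obtained by fixing $\phi$, letting $\phi_m$ be the nearest node, and splitting $\sum_k$ into a local block $|k-m|\le K$ and a tail $|k-m|>K$. For the tail the monotonicity of $\Theta$ yields $|\theta_n(x)-\theta_n(x_{k,n})|\ge c\,|k-m|$, and a mean-value comparison converts this into a lower bound for $|x-x_{k,n}|$, after which the tail is summable and bounded independently of $n$. The local block produces a harmonic-type sum $\sum_{1\le j\le C\|\gamma_n\|}1/j$, whose size is $O(\log\|\gamma_n\|)$; here $\|\gamma_n\|$ enters because it fixes the ratio between the length of $J_1$ and the smallest node spacing near the densest region. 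Collecting the two contributions gives the asserted bound $\lambda_n\le C_3\log\|\gamma_n\|$.

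The step I expect to be the main obstacle is the behaviour near the endpoints $\pm1$, where the poles $1/a_{k,n}$ may accumulate and the density $\Theta'$ may be as large as $\|\gamma_n\|$. There one must show that $\Theta$ is comparable to an affine function of the suitably scaled variable, so that the mean-value comparison of $|x-x_{k,n}|$ with the angular increment loses at most a bounded factor and the harmonic sum is correctly truncated. This regularity of the node distribution near the endpoints is exactly what the hypotheses \eqref{starov1} and \eqref{starov2} provide: each summand $\sqrt{1-|a_{k,n}|}\,t/(1-|a_{k,n}|+t^2)$ is a bump of height $\le1/2$ centred at the scale $t\sim\sqrt{1-|a_{k,n}|}$ at which a pole at distance $\sim1-|a_{k,n}|$ from the circle influences the interval, and the lower bounds $C_1,C_2$ guarantee enough pole mass at every scale down to the finest one. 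The requirement that accumulation at $\pm1$ occur only along nontangential paths ensures, in turn, that the harmonic measures and their derivatives near the endpoints behave as in the unperturbed case, keeping the ratios of $\Theta'$ at comparable points bounded. Verifying these quantitative density estimates and patching the endpoint analysis to the interior estimate is the technical heart of the argument.
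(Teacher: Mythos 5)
This statement is the paper's Theorem~1, which is quoted from Starovoitov and not proved in the paper at all, so there is no in-house argument to compare yours against; I can only judge the sketch on its own terms. Your framework --- writing $\varpi_n=\cos\theta_n$ with $\theta_n(x)=\pi\sum_k\omega(1/a_{k,n},[-1,x],\overline{\mathbb{C}}\setminus[-1,1])$ increasing from $0$ to $\pi n$, locating the nodes on the level sets $\theta_n=(2k-1)\pi/2$, noting $\varpi_n'(x_{k,n})=\mp\theta_n'(x_{k,n})$, and splitting the Lebesgue sum into a local block plus a tail --- is the standard Rusak--Starovoitov strategy and has the right shape; your identification of $\|\gamma_n\|$ with $\max_\phi|\Theta'|$ is also correct.

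There is nevertheless a genuine gap, and you name it yourself: the entire quantitative content of the theorem sits in the step you defer. The tail estimate needs $|x-x_{k,n}|\gtrsim|\theta_n(x)-\theta_n(x_{k,n})|/\theta_n'(\xi)$ with $\theta_n'(\xi)$ comparable to $\theta_n'(x_{k,n})$, and the local block needs the $j$-th nearest term $1/\bigl(\theta_n'(x_{k,n})|x-x_{k,n}|\bigr)$ to be $O(1/j)$; both demand that the density $\gamma_n$ (equivalently $\Theta'$) oscillate by at most a bounded factor over the relevant ranges. When the poles accumulate at $\pm1$ this comparability fails for arbitrary pole configurations, and extracting it from \eqref{starov1}--\eqref{starov2} together with the nontangential-approach hypothesis --- i.e., showing that near each endpoint, at every scale $t$ down to $\sqrt{1-\max|a_{k,n}|}$, the node distribution is comparable to the Chebyshev one in the rescaled variable --- is the actual proof, and none of it is carried out. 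A secondary bookkeeping issue: you write the local block as $\sum_{1\le j\le C\|\gamma_n\|}1/j$, but there are only $n$ nodes (and indeed $\|\gamma_n\|\ge n$ since $\int_0^\pi\gamma_n(\cos\phi)\,d\phi=\pi n$); the correct accounting is that each term is $O(1/j)$ up to the factor by which $\theta_n'$ varies, which is again exactly the missing comparability estimate. As written this is a credible outline of where a proof would go, not a proof.
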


\begin{theorem}\cite{Lukashov1998}
Let $s=2$,  the matrix of reciprocal values of poles $\mathfrak{A}$ is regular with respect to $J_2$, has no accumulation points on $\{|z|=1\}$ with possible exception for points $\{\pm 1\},$ which can be attained by non-tangent paths, and  satisfies the conditions \eqref{starov1},\eqref{starov2},
\begin{equation}
    \label{interp2}\min_{x\in J_2}\frac{\gamma_n(J_2,\mathfrak{A},x)}{\sqrt{(x-c_2)(x-c_3)}\gamma_n(J_2,\mathfrak{A},(c_3+1)/2))}\geq C_4, n\geq 2.
\end{equation}
Then for the matrix $\mathfrak{M}$ such that its $n$-th row consists of zeroes of $\varpi$ from~\eqref{ChebMark} the estimate
\begin{equation}
    \label{lukashovestim1} \lambda_n(\mathfrak{M},\mathfrak{A})\leq C_5\log\Vert\gamma_n(J_2,\mathfrak{A},\cdot)\Vert_{J_2}
\end{equation}
holds.
\end{theorem}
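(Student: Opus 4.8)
The plan is to localize the Lebesgue function and to reduce the two-interval estimate to the single-interval mechanism behind Theorem 1. Write $\Theta_n(x)=\sum_{k=1}^n\omega(1/a_{k,n},J_2\cap[c_1,x],\overline{\mathbb{C}}\setminus J_2)$, so that $\varpi_n(x)=\cos(\pi\Theta_n(x))$ by \eqref{ChebMark}. Regularity \eqref{regular} makes $\Theta_n$ increase monotonically from $0$ to $q_1$ on $[c_1,c_2]$, remain constant on the gap $(c_2,c_3)$, and increase from $q_1$ to $q_1+q_2=n$ on $[c_3,c_4]$; the nodes $x_{k,n}$, being the zeros of $\varpi_n$, are exactly the points where $\Theta_n$ takes a half-integer value, $q_1$ of them in $[c_1,c_2]$ and $q_2$ in $[c_3,c_4]$. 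Since $\cos(\pi\Theta_n(x_{k,n}))=0$ forces $|\sin(\pi\Theta_n(x_{k,n}))|=1$, differentiating $\varpi_n$ and using \eqref{gamma} gives $|\varpi_n'(x_{k,n})|=\pi|\Theta_n'(x_{k,n})|=\gamma_n(J_2,\mathfrak{A},x_{k,n})/\sqrt{-H(x_{k,n})}$, whence
\begin{equation*}
|\ell_{k,n}(\mathfrak{M},\mathfrak{A},x)|=\frac{|\cos(\pi\Theta_n(x))|\,\sqrt{-H(x_{k,n})}}{\gamma_n(J_2,\mathfrak{A},x_{k,n})\,|x-x_{k,n}|}.
\end{equation*}

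Fix $x\in J_2$; assume $x\in[c_1,c_2]$, the case $x\in[c_3,c_4]$ being analogous, and split $\lambda_n(\mathfrak{M},\mathfrak{A},x)=S_{\mathrm{self}}(x)+S_{\mathrm{cross}}(x)$ according to whether $x_{k,n}$ lies in $[c_1,c_2]$ or in $[c_3,c_4]$. For the cross sum I bound $|\cos(\pi\Theta_n(x))|\le1$ and observe that the weight $\sqrt{-H(x_{k,n})}/\gamma_n(J_2,\mathfrak{A},x_{k,n})$ equals $1/(\pi\Theta_n'(x_{k,n}))$, which is comparable to the spacing of consecutive nodes because they differ by one unit of $\Theta_n$. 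Hence $S_{\mathrm{cross}}(x)$ is essentially a Riemann sum for $\tfrac{1}{\pi}\int_{c_3}^{c_4}\frac{dt}{t-x}=\tfrac{1}{\pi}\log\frac{c_4-x}{c_3-x}$, which is at most $\tfrac{1}{\pi}\log\frac{2}{c_3-c_2}$ since $x\le c_2<c_3$ and $c_4-c_1=2$. Here condition \eqref{interp2} does the essential work: it bounds $\gamma_n(J_2,\mathfrak{A},\cdot)$ below by $C_4\sqrt{(x-c_2)(x-c_3)}\,\gamma_n(J_2,\mathfrak{A},(c_3+1)/2)$, a weight vanishing at the same rate as $\sqrt{-H}$ at the inner endpoints, so the weights $\sqrt{-H(x_{k,n})}/\gamma_n(J_2,\mathfrak{A},x_{k,n})$ stay uniformly controlled near $c_3$ and the bound $S_{\mathrm{cross}}(x)=O(1)$ is independent of $n$.

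For $S_{\mathrm{self}}(x)$ I would run the argument of Theorem 1 locally on $[c_1,c_2]$. On this interval $\Theta_n$ is a monotone phase onto $[0,q_1]$ with the square-root behaviour of the harmonic-measure density at both ends, exactly the structure of a single-interval Chebyshev–Markov process, except that the ambient domain is $\overline{\mathbb{C}}\setminus J_2$ rather than $\overline{\mathbb{C}}\setminus[c_1,c_2]$. The lower bounds on $\Theta_n'$ that drive the single-interval estimate are furnished by \eqref{starov1}--\eqref{starov2} at the outer endpoint $c_1=-1$ and by \eqref{interp2} at the inner endpoint $c_2$. Carrying the single-interval estimate through in this localized form yields $S_{\mathrm{self}}(x)\le C\log\Vert\gamma_n(J_2,\mathfrak{A},\cdot)\Vert_{[c_1,c_2]}\le C\log\Vert\gamma_n(J_2,\mathfrak{A},\cdot)\Vert_{J_2}$. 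Combining with the cross bound and taking the supremum over $x\in J_2$ gives \eqref{lukashovestim1}.

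The main obstacle is the behaviour at the inner endpoints $c_2$ and $c_3$. For a single interval both endpoints border the unbounded exterior and the pole accumulation there is governed by Starovoitov's conditions; at $c_2$ and $c_3$ the local geometry of $\overline{\mathbb{C}}\setminus J_2$ differs, so the harmonic-measure density --- hence the node spacing and the lower bound for $|\varpi_n'(x_{k,n})|$ --- must be controlled directly. Making the single-interval reduction on $[c_1,c_2]$ rigorous all the way up to $c_2$, while simultaneously keeping the cross sum bounded near $c_3$, with constants depending on neither $n$ nor the (possibly accumulating) poles, is precisely what hypothesis \eqref{interp2} is engineered to secure; verifying that this single inequality suffices for both the self and the cross estimate is the crux of the proof.
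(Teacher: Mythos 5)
First, a structural remark: the paper does not prove this theorem at all --- it is quoted from \cite{Lukashov1998} as known background, so there is no in-paper argument to compare you against. Judged on its own terms, your proposal is a sensible outline: the identity $|\varpi_n'(x_{k,n})|=\gamma_n(J_2,\mathfrak{A},x_{k,n})/\sqrt{-H(x_{k,n})}$ is correct, the self/cross decomposition is the natural one, and you correctly identify \eqref{interp2} as the hypothesis that must govern the inner endpoints $c_2,c_3$. But it is a plan, not a proof: the two steps that carry all the difficulty are asserted rather than established, and your closing paragraph concedes as much.

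Concretely: (1) For the cross sum you need $1/\Theta_n'(x_{k,n})$ (your phase function $\Theta_n$) to be comparable to the spacing of consecutive nodes in order to run the Riemann-sum bound. The mean value theorem only gives $1/\Theta_n'(\xi)$ equal to the spacing for some intermediate $\xi$; transferring this to the node itself is a genuine lemma about the regularity of Chebyshev--Markov phase derivatives under \eqref{starov1}--\eqref{starov2}, not a triviality. Your fallback justification via \eqref{interp2} alone does not close this: bounding each weight by $C/\gamma_n(J_2,\mathfrak{A},(c_3+1)/2)$ and multiplying by the $q_2$ nodes in $[c_3,c_4]$ fails when poles accumulate at $1$, since then $q_2$ can be of order $n$ while $\gamma_n$ at the fixed interior point $(c_3+1)/2$ can be $o(n)$. (2) The self sum is the entire content of the theorem, and ``run the argument of Theorem 1 locally on $[c_1,c_2]$'' is not an argument. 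Starovoitov's proof uses conditions \eqref{starov1}--\eqref{starov2} to control pole accumulation at \emph{both} endpoints of the single interval; on $[c_1,c_2]$ the endpoint $c_2$ sits at the gap, has a different local harmonic-measure geometry, and carries no Starovoitov-type condition. Showing that \eqref{interp2} substitutes for it there, and that the resulting bound is $C\log\Vert\gamma_n(J_2,\mathfrak{A},\cdot)\Vert_{J_2}$ rather than, say, $C\log n$ or something weaker, requires the quantitative node-counting and nearest-node estimates that you have not written down. Until those two ingredients are supplied, the proof is incomplete.
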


\begin{theorem}\cite{Lukashov2005}
Let $s\geq 1,$ the matrix of reciprocal values of poles $\mathfrak{A}\subset\{|z|\leq r\}, 0<r<1,$ is regular with respect to $J_s.$ Then for the matrix $\mathfrak{M}$ such that its $n$-th row consists of zeroes of $\varpi$ from \eqref{ChebMark} the estimate
\begin{equation}
    \label{lukashovestim2} \lambda_n(\mathfrak{M},\mathfrak{A})\leq C_7\log n
\end{equation}
holds.

\end{theorem}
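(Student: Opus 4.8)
The plan is to reduce the Lebesgue‑constant bound to two ingredients: a two‑sided control of the ``phase density'' $\Phi'_n$ that follows from the hypothesis $\mathfrak{A}\subset\{|z|\le r\}$, and a Chebyshev‑type estimate of the fundamental fractions carried out in the phase variable $\theta=\pi\Phi_n(x)$, where $\Phi_n(x)=\sum_{k=1}^n\omega(1/a_{k,n},J_s\cap[c_1,x],\overline{\mathbb C}\setminus J_s)$. First I would record the explicit form of the nodes and derivatives. Since $\varpi_n=\cos(\pi\Phi_n)$ by \eqref{ChebMark}, the nodes $x_{k,n}$ are exactly the points where $\Phi_n$ takes half‑integer values, and differentiation gives $\varpi'_n=-\pi\Phi'_n\sin(\pi\Phi_n)$; at a node $|\sin(\pi\Phi_n(x_{k,n}))|=1$, whence $|\varpi'_n(x_{k,n})|=\pi\Phi'_n(x_{k,n})=\gamma_n(J_s,\mathfrak A,x_{k,n})/\sqrt{-H(x_{k,n})}$ by \eqref{gamma}. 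Thus
\[
|\ell_{k,n}(\mathfrak M,\mathfrak A,x)|=\frac{|\cos(\pi\Phi_n(x))|\,\sqrt{-H(x_{k,n})}}{\gamma_n(J_s,\mathfrak A,x_{k,n})\,|x-x_{k,n}|},
\]
so the whole estimate is reduced to controlling $\gamma_n$ and the node spacing.

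The heart of the argument is the comparison of phase densities. The $x$‑derivative of each summand in $\Phi_n$ is the Poisson kernel (harmonic‑measure density) $P(1/a_{k,n},x)$ of the domain $\overline{\mathbb C}\setminus J_s$ at the boundary point $x$, seen from the interior point $1/a_{k,n}$. Because $\mathfrak A\subset\{|z|\le r\}$ with $r<1$, all poles $1/a_{k,n}$ lie in the compact set $\{|w|\ge 1/r\}\subset\overline{\mathbb C}\setminus J_s$, bounded away from $J_s$. By Harnack's inequality together with the boundary Harnack principle at the slit endpoints $c_1,\dots,c_{2s}$ (where the kernel carries the universal $1/\sqrt{-H(x)}$ edge singularity), there are constants $0<c\le C$ depending only on $r$ and the partition \eqref{partition} with
\[
c\,\mu(x)\le P(1/a_{k,n},x)\le C\,\mu(x),\qquad x\in\mathrm{Int}\,J_s,
\]
uniformly in $k,n$, where $\mu$ is the equilibrium density of $J_s$, of the form $\mu(x)=|q(x)|/(\pi\sqrt{-H(x)})$ with $q$ a fixed polynomial of degree $s-1$ bounded on $J_s$. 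Summing over $k$ gives $\Phi'_n(x)\asymp n\,\mu(x)$ (consistent with $\sum_k\omega(1/a_{k,n},J_s,\overline{\mathbb C}\setminus J_s)=n$), hence $\gamma_n(J_s,\mathfrak A,x)\asymp n\,|q(x)|$ and in particular $\|\gamma_n(J_s,\mathfrak A,\cdot)\|_{J_s}\le C' n$.

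With these bounds in hand I would pass to the phase variable $\theta=\pi\Phi_n(x)$, which is strictly increasing on each $[c_{2j-1},c_{2j}]$ and constant on the gaps, and which maps the nodes to the half‑integer grid $\{\pi(m+\tfrac12)\}$. The two‑sided density bound converts the geometric factor $\Phi'_n(x_{k,n})|x-x_{k,n}|$ into a quantity comparable to the phase distance $|\theta-\theta_k|$ whenever $x$ and $x_{k,n}$ lie in the same interval, and shows that nodes in different intervals are separated in phase by full integer amounts, so their contributions are dominated by the same grid sum. The estimate of $\lambda_n(x)=\sum_k|\ell_{k,n}(x)|$ then follows the classical Chebyshev pattern: split the sum into the boundedly many nodes nearest to $x$, whose total contribution is $O(1)$, and the remaining far nodes, for which $|\cos\theta|/|\theta-\theta_k|$ sums to a harmonic‑type series $\sum_{1\le j\lesssim n}1/j\asymp\log n$. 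Taking the supremum over $x\in J_s$ yields $\lambda_n(\mathfrak M,\mathfrak A)\le C_7\log n$.

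I expect the main obstacle to be the uniform density comparison of the second paragraph \emph{up to} the endpoints of $J_s$: away from the endpoints it is a routine Harnack argument, but at the slit ends $c_i$ both $P(1/a_{k,n},\cdot)$ and $\mu$ blow up like $1/\sqrt{-H}$, and one must show their ratio stays bounded uniformly in the pole position. This is exactly where the separation $\mathrm{dist}(1/a_{k,n},J_s)\ge 1/r-1>0$ forced by $\mathfrak A\subset\{|z|\le r\}$ is indispensable, and it is the reason the present theorem excludes accumulation of poles on the convex hull. A secondary technical point is the bookkeeping across the gaps between intervals, which must be arranged so that the multi‑interval grid sum is still dominated by a single $O(\log n)$ harmonic series rather than by $s$ independent ones; the regularity condition \eqref{regular}, which forces an integer number $q_j$ of nodes per interval, is what makes this bookkeeping clean.
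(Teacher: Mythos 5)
The paper does not actually prove this statement: Theorem~3 is quoted from \cite{Lukashov2005} and used as a black box, so there is no in-paper argument to set yours against. Judged on its own, your outline follows what is essentially the standard (and presumably the original) route --- two-sided control of the phase density $\Phi_n'$ coming from the separation of the poles from $J_s$, followed by the classical Chebyshev-type summation in the phase variable --- and I believe it can be completed. Two remarks. First, the step you single out as the main obstacle is in fact the easy one: for each fixed $x\in\mathrm{Int}\,J_s$ the map $w\mapsto P(w,x)$ is a positive harmonic function on $\overline{\mathbb{C}}\setminus J_s$, so Harnack's inequality on the compact connected set $\{|w|\ge 1/r\}\cup\{\infty\}$ gives $P(w,x)\asymp P(\infty,x)=\mu(x)$ with constants depending only on $r$ and the partition and \emph{not} on $x$; because the comparison is made in the pole variable rather than in $x$, the common $1/\sqrt{-H}$ edge singularity cancels automatically and no boundary Harnack principle at the slit ends is needed. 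Second, your bookkeeping across the gaps is wrong as stated: $\Phi_n$ is constant on each gap and takes the same integer value $\sum_{i\le j}q_i$ at both $c_{2j}$ and $c_{2j+1}$, so a node just to the left of a gap and a point just to the right of it have nearly \emph{equal} phases, not phases differing by a full integer. The cross-band contributions are nevertheless $O(1)$, but for a different reason: there $|x-x_{k,n}|$ is bounded below by the gap length while $|\varpi_n'(x_{k,n})|=\pi\Phi_n'(x_{k,n})\gtrsim n$ (since $\mu$ is bounded below on $J_s$, the zeros of the degree-$(s-1)$ numerator polynomial lying in the gaps), so each such term is $O(1/n)$ and there are at most $n$ of them. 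With these two corrections your argument closes and yields \eqref{lukashovestim2}.
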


The first our result is the following.
\begin{theorem}
For any $s\geq 2$ and for any system \eqref{intervals} there exists a class of matrices of reciprocal values of poles $\mathfrak{A}$ which are regular with respect to $J_s$, the poles have  accumulation points on  $J_s$, including all points of the partition \eqref{partition} and for the matrix $\mathfrak{M}$ such that its $n$-th row consists of zeroes of $\varpi_n$ from \eqref{ChebMark} the estimate
\begin{equation}
    \label{first} \lambda_n(\mathfrak{M},\mathfrak{A})\leq C_8\log\Vert\gamma_n(J_s,\mathfrak{A},\cdot)\Vert_{J_s}
\end{equation}
holds. 
\end{theorem}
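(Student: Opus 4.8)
The plan is to realize $J_s$ as an \emph{exact} inverse image of the single interval $[-1,1]$ under a rational map whose poles we are free to move, and then to transport Starovoitov's one-interval estimate (Theorem~1) through this map. Concretely, for each admissible degree I would look for a rational function $R_n$, with poles off $J_s$, such that $R_n^{-1}([-1,1]) = J_s$, each gap $(c_{2j}, c_{2j+1})$ contains the critical points of $R_n$ whose critical values fall outside $[-1,1]$, and the endpoints $c_1,\ldots,c_{2s}$ are exactly the points carried to $\{\pm 1\}$. The point of the inverse-image philosophy advertised in the introduction is that $J_s$ itself is kept fixed: rather than approximating $J_s$ by polynomial preimages, I use the extra freedom in the pole positions of $R_n$ to hit the prescribed endpoints $c_i$ exactly, which is what makes the argument work for an arbitrary partition \eqref{partition} and not only for those with commensurable harmonic measures.

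The first technical step is the construction of the regular class. For each $j$ the harmonic measure $\omega_j$ of $[c_{2j-1},c_{2j}]$ seen from the poles is a continuous function of the pole locations, so I would prescribe a vector of target integers $(q_1,\ldots,q_s)$, with $q_j=q_j(n)\to\infty$, and solve the system \eqref{regular} $\sum_k \omega_j(1/a_{k,n})=q_j$ for the pole positions by a continuity/degree argument, the $s$ constraints being matched by an equal number of free pole parameters of $R_n$. Regularity of $\mathfrak{A}$ is then \eqref{regular} by construction, and the associated $\varpi_n$ in \eqref{ChebMark} is precisely the Chebyshev--Markov function exhibiting $J_s$ as the inverse image of $[-1,1]$. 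I would arrange the poles so that, in the model interval, the corresponding one-interval poles accumulate only at $\pm 1$; pulling these back by $R_n^{-1}$ forces accumulation of the poles of $\varpi_n$ at every preimage of $\{\pm 1\}$, i.e. at all partition points $c_1,\ldots,c_{2s}$, which is the accumulation phenomenon claimed in the statement.

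The heart of the proof is the transport of the Lebesgue function. Writing $\varpi_n = T_N\circ R_n$, where $T_N$ is the one-interval Chebyshev--Markov function on $[-1,1]$ of reduced degree $N=n/\deg R_n$, the interpolation nodes on $J_s$ are exactly the $R_n$-preimages of the zeros of $T_N$. Splitting the sum \eqref{Lebegfunct} according to the branch of $R_n^{-1}$ to which each node belongs, I would bound the Lebesgue function $\lambda_n(\mathfrak{M},\mathfrak{A},x)$ on $J_s$ by the one-interval Lebesgue function at the point $R_n(x)\in[-1,1]$, up to a bounded factor coming from the derivative $R_n'$ and finitely many neighbouring-node corrections. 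Theorem~1 applied on $[-1,1]$ then gives $\lambda_n(\mathfrak{M},\mathfrak{A})\leq C\log\Vert\gamma_N(J_1,\cdot)\Vert_{J_1}$, and a chain-rule comparison of the densities in \eqref{gamma} under $R_n$ identifies $\Vert\gamma_N(J_1,\cdot)\Vert_{J_1}$ with $\Vert\gamma_n(J_s,\cdot)\Vert_{J_s}$ up to constants, yielding \eqref{first}. Along the way I must check that the pulled-back one-interval poles (the poles of $T_N$) satisfy \eqref{starov1}--\eqref{starov2} and avoid tangential accumulation on $\{|z|=1\}$ away from $\pm 1$, which follows from the way the target integers $q_j(n)$ and the pole moduli are chosen.

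I expect the main obstacle to be the behaviour near the partition points $c_i$, where $R_n$ has a critical point and $R_n^{-1}$ folds two (or more) branches together. There the factor $R_n'$ vanishes and the naive branchwise bound degenerates, so the delicate point is to show that the contributions of the several nodes clustering around each $c_i$ recombine into a quantity of the same order as a single one-interval contribution, rather than accumulating an extra factor that would destroy the logarithmic bound; this is the analogue, in the present rational-with-moving-poles setting, of the endpoint analysis in Totik's inverse-image method. A secondary difficulty, already flagged above, is purely constructive: proving that the system \eqref{regular} is solvable with poles that both realise the exact partition \eqref{partition} and accumulate at all the $c_i$ simultaneously for infinitely many $n$, which is where the solvability argument for the harmonic-measure equations does the real work.
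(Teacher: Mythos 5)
Your proposal is essentially the paper's own argument: a fixed-pole rational map $r_m$ exhibiting $J_s$ as an exact inverse image of $[-1,1]$ (with the regularity condition \eqref{regularcond} supplying the map), Starovoitov's Theorem~1 on the model interval with poles accumulating non-tangentially at $\pm1$, and a transport lemma for the fundamental functions whose only delicate point is the recombination of branches at the critical points over $\pm1$ — exactly the content of the paper's Lemma~1 and the chain-rule identities for $\gamma_{mn}$, $\varpi_n'$ and $r_m'$. The obstacle you flag at the partition points is precisely what Lemma~1 resolves via the extra terms involving $\ell_{k,n}(\mathfrak{Y},\mathfrak{B},\pm1)/\varpi_n(\pm1)$, so the plan is sound and matches the paper's proof.
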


Next theorem shows that the behaviour of the Lebesgue constants can be very unexpected, namely it can be of order $\log n$ in spite of the existence of an arbitrary finite set of accumulation points of poles on the interval.

\begin{theorem}
Let $F$ be an arbitrary finite subset of $[-1,1]$, i.e.  
$$
F=\{t_1,\ldots,t_N\}\subset[-1,1].
$$
Then there exists a matrix of inverse values of poles $\mathfrak{A}$ such that every point of $F$ is an accumulation point of corresponding poles and for  the matrix $\mathfrak{M}$ such that its $n$-th row consists of zeroes of $\varpi$ from \eqref{ChebMark} the estimate \eqref{lukashovestim2} holds.
\end{theorem}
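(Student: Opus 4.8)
The plan is to realize the interpolation on the single interval $J_1=[-1,1]$ (so $s=1$), to build the matrix $\mathfrak{A}$ explicitly row by row, and then to invoke Starovoitov's theorem (Theorem 1), showing that in our construction its right-hand side $\log\|\gamma_n(J_1,\mathfrak{A},\cdot)\|_{J_1}$ is in fact $O(\log n)$. Note first that for $s=1$ every matrix is automatically regular: here $\omega_1\equiv 1$ on $\overline{\mathbb{C}}\setminus[-1,1]$, so the regularity sum in \eqref{regular} equals $n\in\mathbb{N}$ for free, and no constraint is imposed. I would split $F$ into its interior part $F_0=F\cap(-1,1)$ and its endpoint part $F_1=F\cap\{-1,1\}$, since the two require different placements of poles: a reciprocal pole approaching an interior $t_i$ corresponds to an actual pole $1/a_{k,n}$ staying at positive distance from $[-1,1]$, whereas a reciprocal pole approaching $\pm1$ drives an actual pole to the endpoint, which is the only genuinely delicate regime.

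For the $n$-th row I would place: for each interior $t_i\in F_0$ a small cluster of reciprocal poles tending to $t_i$ (for instance $a_{k,n}=t_i$ together with $t_i+1/n$), all kept inside a fixed disk $\{|z|\le r_0\}$ with $r_0=\max_{t_i\in F_0}|t_i|<1$; for each endpoint in $F_1$, say $t_i=1$, a dyadic family $a_{k,n}=1-4^{-j}$, $j=1,\ldots,J_n$ with $J_n\asymp\log n$ approaching $1$ radially (hence nontangentially), and symmetrically for $t_i=-1$; two fixed poles with $\mathrm{Re}\,a>0$ and $\mathrm{Re}\,a<0$ bounded away from the unit circle, so that both sums \eqref{starov1} and \eqref{starov2} stay bounded below even on a side carrying no endpoint of $F$; and the remaining $\kappa_n\ge 1$ reciprocal poles at $0$ to fill the degree. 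All poles are real, so the conjugate-pairing convention is trivial and the only accumulation on $\{|z|=1\}$ occurs, as permitted, radially at $\pm1$.

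The verification then proceeds as follows. Condition \eqref{starov1} (resp. \eqref{starov2}) holds because for every scale $t\in[\sqrt{1-\max|a_{k,n}|},1]$ the dyadic pole with $4^{-j}\approx t^{2}$ contributes a term bounded below by an absolute constant, while on a side without endpoint accumulation the fixed pole suffices over the correspondingly shortened $t$-range. Theorem 1 then gives $\lambda_n(\mathfrak{M},\mathfrak{A})\le C_3\log\|\gamma_n(J_1,\mathfrak{A},\cdot)\|_{J_1}$, and the decisive step is to prove $\|\gamma_n\|_{J_1}=O(n)$. Writing $\gamma_n$ through the harmonic-measure density, each pole at infinity contributes $1$, each interior pole contributes $O(1)$ since its reciprocal sits at distance bounded below from $[-1,1]$, and a pole at distance $\delta=4^{-j}$ from an endpoint contributes $O(\delta^{-1/2})=O(2^{j})$; summing the dyadic family yields $O(2^{J_n})=O(n)$, and the remaining poles contribute $O(n)$. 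Hence $\log\|\gamma_n\|_{J_1}=O(\log n)$, which is exactly \eqref{lukashovestim2}.

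Genuine accumulation at each $t_i$ is built into the construction: the interior clusters $t_i+1/n\to t_i$ and the dyadic families satisfy $1-4^{-J_n}\to\pm1$ as $n\to\infty$, so every point of $F$ is an accumulation point of $\mathfrak{A}$. If $F\subset(-1,1)$ one may bypass Theorem 1 altogether, since then $\mathfrak{A}\subset\{|z|\le r_0\}$ with $r_0<1$ and Theorem 3 gives \eqref{lukashovestim2} directly. The main obstacle I anticipate is the simultaneous calibration of the endpoint scales: they must be coarse enough to keep the sums \eqref{starov1}, \eqref{starov2} bounded below and the phase-derivative sum defining $\gamma_n$ at size $O(n)$, yet fine enough that the closest pole still tends to the endpoint as $n\to\infty$. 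The choice $\delta_j=4^{-j}$ with $J_n\asymp\log n$ threads this needle, and securing constants uniform in $N$ and in the geometry of $F$ is the quantitative heart of the argument.
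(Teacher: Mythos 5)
Your construction proves a different statement from the one the theorem makes. The theorem asserts that the \emph{poles} themselves, i.e.\ the points $1/a_{k,n}$, accumulate at every $t_i\in F$ --- this is the whole point of the paper (cf.\ the discussion of Min's open problem in the introduction: the poles are allowed to touch $[-1,1]$ in the limit while the Lebesgue constant stays $O(\log n)$). You instead place the \emph{reciprocal values} $a_{k,n}$ near the interior points $t_i\in(-1,1)$; the actual poles $1/a_{k,n}$ then cluster at $1/t_i$, which lies outside $[-1,1]$ at distance bounded away from it, so for interior $t_i$ nothing accumulates on the interval and the configuration is essentially the one already covered by Theorem~3 (as your own fallback remark confirms). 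Note also that no all-real matrix can witness the theorem when $F\cap(-1,1)\neq\emptyset$: a real pole tending to an interior point of $[-1,1]$ would make $\varpi_n$ blow up on the interval itself, so the poles must converge to $t_i$ through non-real values. Your dyadic clusters do handle the endpoints $t_i=\pm1$, but that case is already in \cite{Rovba1976,Starovoitov}.

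The missing idea is the paper's pull-back mechanism. One builds a single Chebyshev--Markov rational function $r_m$ with real poles $1/\alpha_j^0$ safely outside $[-1,1]$ such that $r_m(t_i)=1$ for \emph{every} $i$ simultaneously; arranging $N$ such conditions at once is the real work, done by showing via a Cauchy-determinant computation that the map $(\alpha_1,\dots,\alpha_N)\mapsto(f_1,\dots,f_N)$ of normalized arccosine sums is a local diffeomorphism, so the values $f_i$ can be made rational and then cleared to integers by a multiplier $k$. Since each $t_i$ is a local maximum of $r_m$ with $r_m''(t_i)<0$, the equation $r_m(z)=1/(1-1/n)>1$ has complex conjugate solutions $z\approx t_i\pm i\,c_i/\sqrt{n}$; declaring these to be the poles $1/a_{j,n}$ for $n$ in the progression $\nu m$ makes every $t_i$ a genuine accumulation point of poles, while Lemma~1 transfers the $O(\log n)$ bound from the single-interval matrix $b_{1,n}=0$, $b_{k,n}=1-1/n$ covered by Theorem~1. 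None of this machinery appears in your proposal, and without it the interior points of $F$ are not reached.
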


This theorem can be extended to the case of several intervals as follows.

\begin{theorem}
Let $F$ be an arbitrary finite subset of $J_s$, i.e.  
$$
F=\{t_1,\dots, t_N\} \subset J_s.
$$
Then there exists a matrix of inverse values of poles $\mathfrak{A}$ such that every point of $F$ is an accumulation point of corresponding poles and for the matrix $\mathfrak{M}$ such that its n-th row consists of zeroes of $\varpi_n(x)$ from \eqref{ChebMark} the estimate \eqref{lukashovestim2} holds.
\end{theorem}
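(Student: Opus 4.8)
The plan is to deduce this several-interval statement from the one-interval Theorem 5 by the inverse-image method announced in the introduction, taking as the change of variables the several-interval Chebyshev--Markov function itself. Since the case $s=1$ is exactly Theorem 5, I may assume $s\ge 2$. First I would fix, once and for all, a regular matrix $\mathfrak B$ of some base degree $m$ with respect to $J_s$ (such matrices exist; one may take a single row from the construction of Theorem 4), and set $\Psi:=\varpi_m$ to be its Chebyshev--Markov function from \eqref{ChebMark}. By the regularity \eqref{regular} of $\mathfrak B$ the function $\Psi$ is a fixed rational map of degree $m$ that maps each subinterval $[c_{2j-1},c_{2j}]$ onto $[-1,1]$ exactly $q_j$ times and satisfies $J_s=\Psi^{-1}([-1,1])$ \emph{exactly}. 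This $\Psi$ is the ``rational that provides the partition'': the set $J_s$ is kept fixed and realized as a genuine, not approximate, $m$-fold inverse image of $[-1,1]$, which is possible precisely because the poles of $\mathfrak B$ were placed so as to make the harmonic-measure sums in \eqref{regular} integers. Crucially the poles of $\Psi$ are fixed, independent of the degree.

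Next I would push the finite set forward, $\widetilde F:=\Psi(F)\subset[-1,1]$, and apply Theorem 5 on $[-1,1]$ to obtain a one-interval matrix $\widetilde{\mathfrak A}=\{\tilde a_{k,\nu}\}$ whose poles accumulate at every point of $\widetilde F$ and whose one-interval Lebesgue constants obey $\tilde\lambda_\nu\le C_7\log\nu$, the nodes being the zeros of the one-interval function $\tilde\varpi_\nu$ from \eqref{ChebMark}. I would then pull this scheme back through $\Psi$: as nodes on $J_s$ take the full preimages $\Psi^{-1}(\tilde x_{k,\nu})$, and as poles take the $\Psi$-preimages of the poles $1/\tilde a_{k,\nu}$, assembling a matrix $\mathfrak A$ of degree $m\nu$. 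The algebraic heart of this step is the composition identity $\tilde\varpi_\nu\circ\Psi=\varpi_{m\nu}$ up to a constant, so that the nodes of $\mathfrak A$ are indeed the zeros of the several-interval function \eqref{ChebMark}, as required. Because $\Psi$ covers $[-1,1]$ by $[c_{2j-1},c_{2j}]$ exactly $q_j$ times, for any point $w$ off $[-1,1]$ one has $\sum_{p\in\Psi^{-1}(w)}\omega_j(p)=q_j$; summing over the $\nu$ single-interval poles gives $\sum_{k}\omega_j(1/a_{k,m\nu})=\nu q_j\in\mathbb N$, so $\mathfrak A$ is again regular with respect to $J_s$. Finally, since $\Psi$ is a proper finite map and the poles of $\widetilde{\mathfrak A}$ accumulate at $\widetilde F=\Psi(F)$, their preimages accumulate at $\Psi^{-1}(\widetilde F)\supseteq F$; hence every $t_\ell\in F$ is an accumulation point of the poles of $\mathfrak A$.

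It then remains to transfer the Lebesgue bound. On $\mathrm{Int}\,J_s$ away from the finitely many critical points of $\Psi$ the map is a local diffeomorphism, and the fundamental functions in \eqref{Lebegfunct} pull back so that $\lambda_{m\nu}(\mathfrak M,\mathfrak A,x)\le C\,\tilde\lambda_\nu(\widetilde{\mathfrak M},\widetilde{\mathfrak A},\Psi(x))$ uniformly in $x$; combined with Theorem 5 this yields $\lambda_{m\nu}(\mathfrak M,\mathfrak A)\le C\,C_7\log\nu=O(\log(m\nu))$, which is \eqref{lukashovestim2}. The degrees $n$ not divisible by $m$ I would fill in by writing $n=m\nu+\rho$ with $0\le\rho<m$ and adjoining $\rho$ auxiliary poles at a fixed location off $J_s$; this bounded perturbation preserves regularity and changes $\lambda_n$ by a factor $O(1)$, giving \eqref{lukashovestim2} for all $n$. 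I expect the main obstacle to be exactly the transfer estimate near the critical points of $\Psi$, namely its preimages of $\pm1$, where $\Psi$ folds, the local degree is $\ge2$, and the naive change of variables degenerates. There one must use the Chebyshev-type clustering of the interpolation nodes at the turning points---reflected by the vanishing of the weight $\sqrt{-H(x)}$ at the partition points in \eqref{gamma}---to show that the sum in \eqref{Lebegfunct} stays uniformly bounded by its one-interval counterpart. Everything else, the composition identity, the integrality of the harmonic-measure sums, and the accumulation of the pulled-back poles, is bookkeeping resting on Theorems 4 and 5.
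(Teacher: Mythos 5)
Your proposal follows essentially the same route as the paper: fix an exact rational covering $r_m$ of $[-1,1]$ by $J_s$ (available by choosing poles satisfying \eqref{regularcond}), push $F$ forward to $r_m(F)\subset[-1,1]$, apply Theorem~5 there, and pull the poles and nodes back through $r_m$ using the composition identity. The transfer estimate near the fold points of $r_m$ that you flag as the main obstacle is precisely the content of the paper's Lemma~1 (whose extra boundary terms at $\pm1$ are absorbed into the one-interval Lebesgue constant), so your argument is correct and matches the paper's proof.
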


\section{Proofs}
To prove Theorem 4, we need a lemma relating the fundamental rational functions of Lagrange interpolation under the inverse image of an interval by a rational function with fixed poles.

Let $\mathfrak{Y}$ be an arbitrary matrix of interpolation nodes in $(-1,1),$ and $\mathfrak{B}$ be an arbitrary matrix of reciprocal values of poles in the exterior of $J_1.$ If there exist fixed poles $1/d_k, k=1,\ldots, m,$ such that
$$\sum_{k=1}^m\omega_j(1/d_{k})=p_j, \quad p_j\in\mathbb{N}, \quad j=1,\ldots,s,$$
then the system of intervals $J_s$ is an inverse image of the interval $[-1,1]$ by the rational function
$$r_m(x)=\cos\left(\pi\sum_{k=1}^m\omega(1/d_{k},J_s\cap[c_1,x],\overline{\mathbb{C}}\setminus J_s)\right) $$ with fixed poles $1/d_k, k=1,\ldots, m.$  Hence for any matrix of the interpolation nodes $\mathfrak{Y}\subset(-1,1)$ rows with numbers $mn$ of a matrix of interpolation nodes $\mathfrak{X}\subset$ int $J_s$ are well-defined by $$ \{x_{k,mn}:k=1,\ldots,mn\}=r_m^{-1}(\{y_{i,n}:i=1,\ldots,n\}).$$ We rearrange those rows as $$ y_{k,n}=r_m(x_{(k,j),n}),\, k=1,\ldots,n;\, j=1,\ldots,m. $$

Instead of $\varpi$ for $\mathfrak{X}$ we use notation
$$ \Omega_{mn}(x)=\varpi_n(r_m(x))=\prod_{k=1}^n\frac{r_m(x)-y_{k,n}}{1-b_{k,n}r_m(x)}=C(m,n)\prod_{k=1}^n\prod_{j=1}^m\frac{x-x_{(k,j),n}}{1-a_{(k,j),n}x},$$
where $$ \frac{1}{b_{k,n}}=r_m\left(\frac{1}{a_{(k,j),n}}\right),\, k=1,\ldots,n;\, j=1,\ldots,m.$$
Then the fundamental functions of interpolation for the sets $\mathfrak{Y}$ and $\mathfrak{X}$ are $$ \ell_{k,n}(\mathfrak{Y},\mathfrak{B},y)=\frac{\varpi_n(y)}{\varpi'_n(y_{k,n})(y-y_{k,n})},\, k=1,\ldots,n,$$
and
$$ \ell_{(k,j),n}(\mathfrak{X},\mathfrak{A},x)=\frac{\Omega_{mn}(x)}{\Omega'_{mn}(x_{(k,j),n})(x-x_{(k,j),n})},\, k=1,\ldots,n; j=1,\ldots,m.$$

\begin{lemma}
If  there exist fixed poles $1/d_k, k=1,\ldots, m,$ such that
\begin{equation}
    \label{regularcond}\sum_{k=1}^m\omega_j(1/d_{k})=p_j, \quad p_j\in\mathbb{N}, \quad j=1,\ldots,s,
\end{equation}    
holds then for any  matrix $\mathfrak{Y}$ of interpolation nodes in $(-1,1),$
and for any matrix $\mathfrak{B}$ of reciprocal values of poles in the exterior of $[-1,1]$ we have
\begin{multline*}
 |\ell_{(k,j),n}(\mathfrak{X},\mathfrak{A},x)| 
 \\
 \leq C(d_1,\ldots,d_m)\left(|\ell_{k,n}(\mathfrak{Y},\mathfrak{B},r_m(x))|
 +|\varpi_n(r_m(x))|\left(\left|\frac{\ell_{k,n}(\mathfrak{Y},\mathfrak{B},1)}{\varpi_n(1)}\right| \right.\right.
 \\+\left.\left.\left|\frac{\ell_{k,n}(\mathfrak{Y},\mathfrak{B},-1)}{\varpi_n(-1)}\right|\right)\right),  \ \ \ k=1,\ldots,n;\, j=1,\ldots,m;\, x\in J_s.
 \end{multline*}

\end{lemma}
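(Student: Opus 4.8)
The plan is to turn the stated bound into an exact identity for $\ell_{(k,j),n}(\mathfrak{X},\mathfrak{A},x)$, strip off a factor that is harmlessly bounded because the poles are fixed, and reduce what remains to a single pointwise inequality about the fixed map $r_m$ alone. First I would differentiate $\Omega_{mn}=\varpi_n\circ r_m$ by the chain rule; since $r_m(x_{(k,j),n})=y_{k,n}$ this gives $\Omega'_{mn}(x_{(k,j),n})=\varpi'_n(y_{k,n})\,r'_m(x_{(k,j),n})$, whence
$$\ell_{(k,j),n}(\mathfrak{X},\mathfrak{A},x)=\frac{\varpi_n(r_m(x))}{\varpi'_n(y_{k,n})\,r'_m(x_{(k,j),n})\,(x-x_{(k,j),n})}.$$
Writing $r_m=N/D$ with $D(x)=\prod_{l=1}^m(1-d_l x)$ and using that $N-y_{k,n}D$ vanishes exactly at the $m$ preimages $x_{(k,1),n},\dots,x_{(k,m),n}$ of $y_{k,n}$, I would factor $r_m(x)-y_{k,n}$ and rewrite this as
$$\ell_{(k,j),n}(\mathfrak{X},\mathfrak{A},x)=\ell_{k,n}(\mathfrak{Y},\mathfrak{B},r_m(x))\cdot\frac{D(x_{(k,j),n})}{D(x)}\cdot\prod_{j'\neq j}\frac{x-x_{(k,j'),n}}{x_{(k,j),n}-x_{(k,j'),n}}.$$
Since $d_1,\dots,d_m$ are fixed and $1/d_l\notin J_s$, the factor $D(x_{(k,j),n})/D(x)$ is bounded on $J_s$ by a constant depending only on $d_1,\dots,d_m$; this is where $C(d_1,\dots,d_m)$ originates.

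Next I would divide the left-hand side and the claimed right-hand side by the common nonnegative quantity $|\varpi_n(r_m(x))|/|\varpi'_n(y_{k,n})|$. Using $\ell_{k,n}(\mathfrak{Y},\mathfrak{B},\pm1)/\varpi_n(\pm1)=1/\bigl(\varpi'_n(y_{k,n})(\pm1-y_{k,n})\bigr)$, this reduces the lemma, for every $x$ away from the preimages of $y_{k,n}$, to the purely geometric inequality
$$\frac{1}{|r'_m(\xi)|\,|x-\xi|}\le\frac{C}{|r_m(x)-r_m(\xi)|}+C\left(\frac{1}{|1-r_m(\xi)|}+\frac{1}{|1+r_m(\xi)|}\right),\qquad\xi:=x_{(k,j),n},$$
in which $n$, $\mathfrak{B}$ and $\varpi_n$ no longer appear, so that $C$ may depend only on $r_m$, i.e. on the fixed $d_1,\dots,d_m$. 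The decisive structural fact is that $r_m$ is a fixed rational function, analytic on $J_s$, whose derivative vanishes on $J_s$ exactly at its interior extrema (there are $m-s$ of them), all with critical value $+1$ or $-1$; consequently $r'_m(\xi)$ is small precisely when $\xi$ lies near such an extremum, that is precisely when $y_{k,n}=r_m(\xi)$ is close to $+1$ or $-1$, which is exactly the situation in which the matching boundary term on the right is large.

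I would establish the displayed inequality by a case analysis in the distance of $\xi$ from the interior extrema. Away from fixed neighbourhoods of these points one has $|r'_m(\xi)|\ge\delta>0$, and $|r_m(x)-r_m(\xi)|\asymp|x-\xi|$ for $x$ near $\xi$, so the first term on the right dominates; for $x$ far from $\xi$ the left-hand side is bounded while the right-hand side is bounded below by $C/2$. When $\xi$ is near an extremum $x_\ast$ with $r_m(x_\ast)=\sigma\in\{+1,-1\}$, I would use the quadratic model $r_m(t)-\sigma\approx\tfrac12 r''_m(x_\ast)(t-x_\ast)^2$, which gives $|r'_m(\xi)|\asymp\sqrt{|r''_m(x_\ast)|\,|\sigma-y_{k,n}|}$ and lets the boundary term $1/|\sigma-y_{k,n}|$ absorb the degenerate factor $1/|r'_m(\xi)|$. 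Finally, the points $x$ within a fixed neighbourhood of some preimage of $y_{k,n}$, where the common factor vanishes and the reduction is invalid, I would treat directly from the chain-rule identity, where $\ell_{(k,j),n}$ is plainly $O(1)$ and the right-hand side is already comparable to the direct term.

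The main obstacle is the near-extremum case: one must verify the inequality with constants uniform in the position of $\xi$ and over all $x\in J_s$, simultaneously controlling the vanishing of $r'_m(\xi)$, the competition between the two terms on the right, and the failure of the linear approximation to $r_m$ where the quadratic model takes over. Making the transitions between these regimes quantitative, with a single constant depending only on $d_1,\dots,d_m$, is the technical heart of the proof.
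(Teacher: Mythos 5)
Your proposal is correct and takes essentially the same route as the paper: starting from the chain-rule identity $\Omega'_{mn}(x_{(k,j),n})=\varpi'_n(y_{k,n})\,r'_m(x_{(k,j),n})$, everything reduces to comparing $1/(|r'_m(\xi)|\,|x-\xi|)$ with $1/|r_m(x)-r_m(\xi)|$ and $1/|\pm1-r_m(\xi)|$, which is settled by Taylor expansion when $|x-\xi|\le|r'_m(\xi)|$ and by the quadratic behaviour of $r_m$ near its interior critical points (whose critical values are $\pm1$, by the Pell-type identity) otherwise --- exactly the paper's case analysis. The detour through the exact product formula with $D(x_{(k,j),n})/D(x)$ and the Lagrange-type product is superfluous (and that product is not uniformly bounded when preimages coalesce), but your argument does not actually rely on it.
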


\begin{proof}[Proof of lemma~1]  Mainly  it is similar to the proof of \cite[Lemma 1]{KrooSzabados} but for the sake of completeness we present it here as well. Below  we will omit second index $n$ and the dependence of constants on $ d_1,\ldots,d_m.$ Put $y=r_m(x)$. 

First, let us suppose $|x-x_{kj}|\leq |r_m'(x_{kj})|. $ Then 
$$\ell_{(k,j)}(\mathfrak{X}_{mn},\mathfrak{A},x)=\frac{\varpi_n(y)}{\varpi_n'(y_{k})}\cdot\frac{1}{r'_m(x_{kj})(x-x_{kj})} $$
and 
\begin{multline*} |y-y_k|=|r_m(x)-r_m(x_{kj})|\leq |r_m'(x_{kj})|\cdot|x-x_{kj}| \\
+\frac{1}{2}\Vert r''_m\Vert_{[-1,1]}(x-x_{kj})^2\leq C_1|r'_m(x_{kj})|\cdot|x-x_{kj}|.
\end{multline*}
Hence $$ |\ell_{(k,j)}(\mathfrak{X}_{mn},\mathfrak{A},x)|\leq C_1\left|\frac{\varpi_n(y)}{\varpi_n'(y_{k})(y-y_k)}\right|=C_1|\ell_{k,n}(\mathfrak{Y}_n,\mathfrak{B},r_m(x))|.
$$

Second, suppose $|x-x_{kj}|> |r_m'(x_{kj})|. $ Then
$$|\ell_{(k,j)}(\mathfrak{X}_{mn},\mathfrak{A},x)|\leq \frac{|\varpi_n(y)|}{|\varpi_n'(y_{k})|}\cdot \frac{1}{\left(r_m'(x_{kj})\right)^2}. $$

Let $\xi_{kj}\in {\rm Int} J_s$  be the nearest point to $x_{kj}$ at the same interval $[c_{2i-1},c_{2i}]$,   $i=1,\ldots,s$, as $x_{kj},$ where $|r_m(\xi_{kj})|=1 $ and $ r'_m(\xi_{kj})=0.$ Denote by $I_{kj}\subset J_s$ the interval with center at $\xi_{kj}$ such that $|r_m''(x)|\geq C_2>0$ for $x\in I_{kj}.$ If such a point $\xi_{kj}$ does not exist, then $I_{kj}=\emptyset.$ To prove that $r_m'(\xi_{kj})\neq 0,$ it is sufficient to differentiate the identity \begin{equation}
\label{abelpell}
  r_m^2(x)-(x^2-1)q_{m-1}^2(x)=1,  
\end{equation}where $q$ are called Chebyshev polynomials of the second kind for rational spaces in \cite[p.~141]{BorweinErdelyi}, and $\sqrt{1-x^2}q$ are called sine Chebyshev-Markov fractions in \cite[p.~49]{Rusak79}. 

Now if $x_{kj}\notin I_{kj}$ then $|r_m'(x_{kj})|\geq C_3>0$ and $$|\ell_{(k,j)}(\mathfrak{X}_{mn},\mathfrak{A},x)|\leq \frac{1}{C_3^2} \frac{|\varpi_n(y)|}{|\varpi_n'(y_{k})|}\leq \frac{2}{C_3^2}|\ell_{k,n}(\mathfrak{Y}_n,\mathfrak{B},r_m(x))|.$$

Finally, let $x_{kj}\in I_{kj}.$ Without loss of generality suppose $r_m(\xi_{kj})=-1.$ Then
$$ y_k+1=r_m(x_{kj})-r_m(\xi_{kj})\leq C_4(\xi_{kj}-x_{kj})^2.$$
So $$ |r_m'(x_{kj})|=|r_m'(x_{kj})-r_m'(\xi_{kj})|=|r_m''(\zeta)|\cdot|x_{kj}-\xi_{kj}|\geq\frac{C_2}{\sqrt{C_4}}\sqrt{1+y_k},$$
$\zeta \in (\xi_{kj},x_{kj})\subset I_{kj}$.

Hence $$|\ell_{(k,j)}(\mathfrak{X}_{mn},\mathfrak{A},x)|\leq C_5\frac{|\varpi_n(y)|}{|\varpi_n'(y_{k})|(1+y_k)}\leq C_6\frac{|\varpi_n(y)|}{|\varpi_n(-1)|}\cdot|\ell_{k,n}(\mathfrak{Y}_n,\mathfrak{B},-1)|. $$ 
\end{proof}

Now we are ready to describe the class of regular matrices of reciprocal values of poles which is mentioned in Theorem 4.

\begin{proof}[Proof of Theorem~4]First, we note that the proof of \cite[Theorem 3]{Lukashov2005} shows   that under supposition $\mathfrak{A}_n\subset\{|z|\leq r\}$, $0<r<1$ and $\mathfrak{A}$ is regular with respect to $J_s$, the estimate
$$ C_7\log n\leq\log\Vert\gamma_n(J_s,\mathfrak{A},\cdot)\Vert_{J_s}\leq C_8\log n$$ holds.

Hence it is sufficient to choose a regular matrix of reciprocal values of poles with $\mathfrak{A}_k\subset\{|z|\leq r\}$, $0<r<1, k\neq mn,$ and for $k=mn$ to choose arbitrary $\mathfrak{B}$ satisfying conditions of Theorem 1, $r_m$ as above, and apply Lemma 1. 
Observe that for this choice of matrices of poles and  nodes we have
$$ \gamma_{mn}(J_s,\mathfrak{A},x)=\frac{\Omega_{mn}'(x)\sqrt{-H(x)}}{\sqrt{1-\Omega^2_{mn}(x)}},$$
$$ \varpi_n'(y)=\frac{\sqrt{1-\varpi_n^2(y)}\gamma_n([-1,1],\mathfrak{B},y)}{\sqrt{1-y^2}},$$
and
$$r_m'(x)=\frac{\sqrt{1-r_m^2(x)}\gamma_m(J_s,\{1/d_1,\ldots,1/d_m\},x)}{\sqrt{-H(x)}}.$$

As the matrix of interpolation nodes $\mathfrak{M}$ we take for $k\neq mn$ zeros of $$\varpi_k(x)=\cos\left(\pi\sum_{j=1}^k\omega(1/a_{j,k},J_s\cap[c_1,x],\overline{\mathbb{C}}\setminus J_s)\right), $$ and for $k=mn$ take nodes which are obtained according to the construction of Lemma 1 like $\mathfrak{X}$ from the zeros of $\varpi_n$ from \eqref{ChebMark} with $s=1.$ \end{proof}

\begin{proof}[Proof of Theorem 5] First, it follows from Theorem~1
 that the estimate $$ \lambda_n(\mathfrak{Y},\mathfrak{B})\leq C_9\log n$$ holds 
for the matrix $\mathfrak{B}$ of inverse values of poles with $b_{1,n}=0,b_{2,n}=\ldots=b_{n,n}=1-1/n $ and the  matrix $\mathfrak{Y}$ such that its $n$-th row consists of zeroes of $\varpi_n$ from \eqref{ChebMark} with $s=1$.

To apply Lemma 1, we need to construct a suitable rational function $r_m(x).$ To do it first of all consider functions
$$f_i(\alpha_1,\ldots,\alpha_N)=\frac{1}{2\pi}\sum_{j=1}^N\arccos\frac{t_i-\alpha_j}{1-\alpha_j t_i},\,i=1,\ldots,N,$$
where $\alpha_1,\ldots,\alpha_N$ are distinct points of $(0,1).$ Direct calculations give $$ \frac{\partial f_i}{\partial \alpha_j}=\frac{\sqrt{1-t_i^2}}{2\pi\sqrt{1-\alpha_j^2}(1-\alpha_jt_i)}, \ \ \,i,j=1,\ldots,N.$$
Hence $$\det\left(\frac{\partial f_i}{\partial \alpha_j}\right)_{i,j=1}^N=\prod_{j=1}^N\frac{\sqrt{1-t_j^2}}{2\pi\alpha_j\sqrt{1-\alpha_j^2}} \cdot\det\left(\frac{1}{\frac{1}{\alpha_i}-t_j}\right)_{i,j=1}^N.$$
The last determinant is the Cauchy determinant, and it implies that the mapping 
\begin{equation*}
(f_1,\ldots,f_N):\{0<\alpha_1<\ldots<\alpha_N<1\}\to(0,N/2)^N
\end{equation*}
is a (locally) diffeomorphism. Hence, there exist $\alpha_1^0,\ldots,\alpha_N^0$ such that 
\begin{equation*}
f_i(\alpha_1^0,\ldots,\alpha_N^0)\in\mathbb{Q}, \ \ i=1,\ldots,N.
\end{equation*} 
Then we can find $k\in \mathbb{N}$ such that \begin{equation*}
k\sum_{j=1}^N\arccos\frac{t_i-\alpha^0_j}{1-\alpha_j^0 t_i}\in 2\pi\mathbb{N},\ \ \ i=1,\ldots,N.
\end{equation*}

Now put 
\begin{equation*}
r_m(x)=\cos \left( k\sum_{j=1}^N\arccos\frac{x-\alpha^0_j}{1-\alpha_j^0 x}\right). 
\end{equation*}
This is a Chebyshev-Markov rational function with poles $1/\alpha_1^0,\ldots,1/\alpha_N^0$ of multiplicity $k.$ 

Now repeat the consideration which was applied to prove Theorem 4. Then we obtain a matrix of interpolation nodes $\mathfrak M$ such that its rows consist of zeros of the Chebyshev-Markov rational functions with poles given by the matrix of inverse values $\mathfrak A$ obtained as follows. For $n\neq \nu m$, $\nu\in \mathbb{N}$, its rows can be taken arbitrarily with only one restriction to give the desired estimate of $\lambda(\mathfrak {M}_n,\mathfrak{A}_n).$ And for $n=\nu m$, $\nu\in \mathbb{N}$, the poles $1/a_{j,n}$ by the construction are the solutions of equations $$r_m(1/a_{j,n})=\frac{1}{1-1/n}.$$ By definition,  $r_m(t_i)=1,i=1,\ldots,N$. It follows from \eqref{abelpell} that $r_m''(t_i)<0$, $i=1,\ldots,N$.  Hence, $$ r_m(x)=1+e_i(x-t_i)^2+o(x-t_i)^2,\,e_i<0,\quad x\to t_i, \ i=1,\ldots,N.$$ Therefore, the points $t_i$, $i=1,\ldots,N$, are accumulation points of the corresponding poles, Q.E.D. \end{proof}

\begin{proof}[Proof of Theorem~6]
First of all we choose poles $1/d_k, k=1,\ldots, m,$  such that \eqref{regularcond} holds (in \cite[Proof of Th.2.2]{LukashovSzabados} was proved that it can be given by choosing $d_k=0, k=1,\ldots,n-s, $ and arbitrarily small other $ d_k$'s). Denote by $ r_m$ the related rational function from Lemma~1 which maps the system of intervals $J_s$ onto $[-1,1].$ Put $y_j=r_m(t_j), j=1,\ldots,N.$ Then apply Theorem~5 to find a matrix of inverse values of poles $\tilde{\mathfrak{A}}=\{\tilde{a}_{j,n}\}$ such that every point $ y_j, j=1,\ldots,N, $ is an accumulation point of corresponding poles and for  the matrix $\mathfrak{M}$ such that its $n$-th row consists of zeroes of $\varpi$ from~\eqref{ChebMark} the estimate \eqref{lukashovestim2} holds. We finish the proof similarly to the end of the proof of Theorem~5, but with choosing $$ r_m(1/a_{i,n})=\tilde{a}_{j,n}.$$ 
\end{proof}

\section*{Acknowledgments}

Alexey Lukashov would like to thank 
Sergei Kalmykov from SJTU 
for the hospitality and support during a visit
to 
Shanghai.

\bibliographystyle{plain}
\bibliography{references}
\medskip

Sergei Kalmykov
\\
 School of Mathematical Sciences, CMA-Shanghai, Shanghai Jiao Tong University, P.R. China 
\\
email address: \href{mailto:sergeykalmykov@inbox.ru}{kalmykovsergei@sjtu.edu.cn}

\medskip{}

Alexey Lukashov
\\
Moscow Institute of Physics and Technology (National Research University), Russia.
\\
email address: \href{mailto:alexey.lukashov@gmail.com}{lukashov.al@phystech.edu}
\end{document}